\theoremstyle{plain}
\newtheorem{ex}{Example}[section]
\newcommand{\Z}{\ensuremath{\mathbb{Z}}}
\newcommand{\N}{\ensuremath{\mathbb{N}}}
\newcommand{\F}{\ensuremath{\mathcal{F}}}
\newcommand{\I}{\ensuremath{\mathcal{I}}}
\def\d{\delta}
\def\s{\sigma}
\theoremstyle{plain}
\newtheorem{lemma}{Lemma}
\newtheorem{proposition}{Proposition}
\theoremstyle{definition}
\theoremstyle{remark}
\newtheorem{remark}{Remark}
\numberwithin{equation}{section}
\def\d{\delta}
\def\s{\sigma}
\def\Ker{\operatorname{Ker}}
\def\Ran{\operatorname{Ran}}
\begin{document}
 \title[Martingale-coboundary representation for random fields ]{Martingale-coboundary representation for a class of random fields   \\*[0.6cm]}
\author{Mikhail Gordin}
\address{POMI (Saint Petersburg Department of the Steklov  Institute of Mathematics)\\
 27 Fontanka emb.\\
Saint Petersburg 191023 \\
Russian Federation}
\email{gordin@pdmi.ras.ru}
\thanks{ This work was supported in part by the grant NS 638.2008.1.
A part of this work was done at the Erwin Schr\"odinger
International Institute of Mathematical Physics (Vienna) when the
author participated at the workshops "Amenability beyond groups,"
"Algebraic, geometric and probabilistic aspects of amenability"
(2007) and "Structural Probabilty" (2008). } \keywords{Random
field, martingale difference, coboundary} \subjclass{Primary:
60G60; Secondary: 60Fxx}
\date{}
\vspace{0.5cm}
\vspace{1cm}
\normalsize
\maketitle
\section{Introduction}\label{0}
 Martingale approximation is one of methods of proving limit the\-o\-rems for sta\-tionary random
 sequences. The method, in its simplest version, consists of representing the original random
 sequence as the sum of a martingale difference sequence and a coboundary sequence. In  this
 introduction we give a brief sketch of this approach. The aim of the present paper
 is to extend the martingale approximation method to a certain class of random fields. This is the topic
 of the next two sections of the paper.   \\
Let $\xi=(\xi_n)_{n \in \mathbb Z}$ be a stationary (in the strict sense)
random sequence. Under certain assumptions \cite{Go1969} it can be represented in the form
$$ \xi_n = \eta_n + \zeta_n,$$
where $\eta=(\eta_n)_{n \in \mathbb  Z}$ is a stationary sequence of {\em martingale differences}  (this means that
$E(\eta_n|\eta_{n-1}, \eta_{n-2}, \dots)=0$ for all $ n \in \mathbb  Z$) , and $ \zeta=(\zeta_n)_{n \in \mathbb  Z}$ is
a so-called {\em coboundary} (or {\em coboundary sequence}) which can be written as
  $\zeta_n=\theta_n-\theta_{n-1}, n \in \mathbb  Z,$
by means of a certain stationary sequence  $\theta= (\theta_n)_{n \in \mathbb  Z}.$
 It is assumed that the random sequences  $\xi, \eta, \theta$ in this
 representation are stationarily connected, that is the
 sequence $\bigl((\xi_n, \eta_n, \theta_n)\bigr)_{n \in \mathbb Z}$ of random vectors
 is stationary. Let us observe that, while study\-ing the asymptotic distributions
 of the sums $\sum_{k=0}^{n-1} \xi_k,$ $n \ge 1,$ in many cases one can neglect by the contribution of the
 sequence $\zeta$ into these sums and extend to $\xi=\eta + \zeta$
  limit theorems originally known for the martingale difference $\eta$ only (notice that the
  limit theory for martingale differences is well developed). To be negligible in this sense, the sequence
  $\zeta$ needs not be a coboundary: some conditions are known \cite{MaWo2000,PeUt2005} under
  which approximation of the sums of the sequence  $\xi$ by those of the martingale difference
  sequence $\eta$ is precise enough to conclude that some limit theorems are applicable to $\xi$ once
  they hold for $\eta$; nevertheless, the difference $\zeta=\xi-\eta$  may not  be  a coboundary
  under these conditions. However, we consider here more special situation when
  the negligible summand does have a form of a coboundary: it is this case which admits
  the most transparent description and analysis and seems to be more appropriate for an attempt to
  extend the mar\-tin\-gale approach to random fields. \\
Conditions of limit theorems which are proved by means of the mar\-tin\-gale approximation are usually formulated
in terms of a ceratin filtration. This filtration is defined on the basic probability space; it is assumed to be stationarily connected with the sequence
 $\xi$ (the latter means  that the filtration is the sequence of the past $\s-$fields of a certain  auxiliary
 stationary sequence stationarily connected with $\xi).$ \, In general, the martingale approximation is applicable
 even if  $\xi$ is not adapted to this filtration. However, the adapted case deserves a special attention  not only by pedagogical reasons. It is this situation when there are more satisfactory answers to some natural questions,
such as those about the applicability of the Central Limit Theorem (CLT) and about the variance of the limiting normal
distribution. In the adapted case the sequence $(\xi_n)$  can be thought of as a non-anti\-ci\-pa\-ting function of a Markov chain. A simple condition  in terms of the transition operator (solvability of the so-called Poisson equation) guaranties the desired representation to hold which implies the ap\-pli\-ca\-bi\-li\-ty of the CLT. There
is a simple formula expressing the variance of the limiting normal distribution in terms
of the solution of the Poisson equation (see \cite{GoLi1978,Ma1978} and this Sect.1). Notice
now that the "time reversal" \, in the stationary case does  not hurt
the validity of conclusions about convergence in probability or in distribution: such
assertions are valid or not simultane\-ous\-ly for both
 the original and the reversed sequences. Thus, applying to the adapted case the
 time reversal, one obtains a convenient setup where without loss
 of generality one can assume that all stationary sequences of interest are given rise by
 a certain probability preserving transformation (the latter should be non-invertible in
 nontrivial cases). The decreasing filtration mentioned above arises in this situation as the sequence
 of  $\s-$fields of preimages of measurable sets with respect to the degrees of the basic
 transformation. It is this setting which we have chosen as a framework for a discussion of
 multivariate  generalizations of the martingale approximation method. Notice that various definitions
 of multivariate  arrays of martingale differences are possible (see, for example,
 \cite{BaDo1979, CaWa1975}). Our as\-sump\-tions lead us with necessity to one of them (see Remark \ref{condind})
 which is tightly related to one of several definitions in \cite{CaWa1975}.
 In the present paper we did not discuss in detail  these diverse definitions (though this
 topic is slightly touched in Remark \ref{condind}) because such a discussion seems to be  more
 appropriate in the context of limit theorems which will be considered elsewhere.  \\
 In the rest of Section 1 we remind how in such a setting a simplest result on
 martingale approximation for a random sequence is for\-mu\-lated. In the next sections
 of the paper we turn to establishing  an analogous re\-pre\-sentation for random fields generated by
 a class of measure pre\-serv\-ing actions of the additive semigroup of integral $d-$dimensional vectors with
 non\-nega\-tive entries.

 Let $T$ be a measure preserving transformation of a probability space $(X, \F, P).$
 Stationary sequences we are going to consider are of the form $(f \circ T^n)_{n \ge 0} $,
 where $f$ is a real-valued measurable function on $X.$ Set for $f \in L_2=L_2(X, \F, P)$ \,
 $ Uf=f\circ T,$ and let $ U^*:L_2 \to L_2 $ be the conjugate of the operator $U$.
 The operators $U$ è $U^*$ are, respectively, an isometry and a coisometry in $L_2$.
 Both of them preserve values of constant functions and map nonnegative functions to nonnegative ones.
 Consider $U^*$ as a transition operator of a Markov chain  taking values
in $X$ and having $P$ as a stationary distribution. The current state of
the chain uniquely determines the previous one by means of the transformation $T$.
 Let $E^{\mathcal G}$ and $I$  denote the conditional expectation operator with respect to some $\s-$field
$\mathcal G \subset \mathcal F$ and the identity operator, correspondingly. The relations
$$ (U^*)^n U^n= I, \, U^n (U^*)^n = E^{\, T^{-n}\mathcal F}, \, n \ge 0, $$
 hold between the operators $U$ è $U^*.$

Let us now assume that, for some function  $f \in L_2,$ a function
$g \in L_2$ solves the  {\em Poisson equation}
\begin{equation} \label{poisson1}
 f =g - U^*g.
\end{equation}
 Then, setting $h_1=U^*g,$ we have
$$U^*f =h_1 - U^*h_1, $$
which implies
$$  E^{\, T^{-1}\mathcal F}(f -Uh_1+h_1)= UU^*(f -Uh_1+h_1)=U(U^*f - h_1+U^*h_1)=0.$$
Since
$$f -Uh_1+h_1=g - U^*g - UU^*g + U^*g= g - UU^*g,$$
 we obtain the representation
\begin{equation} \label{onedim}
 f = h + (U-I)h_1,
 \end{equation}
where
\begin{equation} \label{formula1}
 h=g - UU^*g, \, h_1=U^*g.
\end{equation}

We observe that the summands of the right hand side of  \eqref{onedim} give rise to the
stationary sequences  $(U^nh)_{n \ge 0}$ and  $(U^{n+1}h_1 - U^n h_1)_{n \ge 0}$  of the
reversed martingale differences and the coboundaries, respectively.
Representation \eqref{onedim} is the basis for applying the martingale ap\-proxi\-ma\-tion method  for
proving the Central Limit Theorem and other pro\-ba\-bi\-lis\-tic limit results. Also certain conditions
 for solvability of the equation \eqref{poisson1}  are known which are based on the statistical ergodic theorem
 for the operator $U^*.$
\begin{remark}
There exist  expressions in terms of the solution of the Poisson equation for the
conditional and the unconditional variances of the martingale difference appearing in
\eqref{formula1}. Indeed, taking into account the first of relations \eqref{formula1},
we obtain (cp. \cite{GoLi1978,Ma1978})
\begin{equation} \label{condvar1}
E^{\, T^{-1}\mathcal F} |h|^2 =
 UU^*|g -UU^*g|^2
= UU^*|g|^2-U|U^* g|^2,
\end{equation}
or
\begin{equation} \label{condvar2}
E^{\, T^{-1}\mathcal F} |h|^2 =
E^{\, T^{-1}\mathcal F} |g|^2- |E^{\, T^{-1}\mathcal F} g|^2.
  \end{equation}
It follows from \eqref{condvar1} that
\begin{equation} \label{var}
  E |h|^2= E|g|^2 - E|U^*g|^2.
  \end{equation}
The latter quantity equals the limiting variance in the Central Limit Theorem for
the sequence $\xi.$
  \qed
\end{remark}

In the present paper a multivariate  analogue of the above si\-tu\-a\-tion is considered.
Some conditions are investigated which ensure the validity of a representation and
relations  similar to \eqref{onedim} and \eqref{formula1}. The unicity issue of
such a representation is also examined. However,
we do not touch applications to limit theorems. Though the case of square-integrable
variables is of main interest, our considerations concern the $L_p$ spaces where
$1\! \le p \! \le \! \infty $ or $ 1\! \le p \! <\! \infty $.
 A multivariate  generalization of the representation  \eqref{onedim} is presented in
 Proposition \ref{martingale+coboundary}. The main assumption here is the solvability of the
equation \eqref{poisson_1}, a higher ana\-lo\-gue of the Poisson equation \eqref{poisson1}. Sol\-va\-bi\-li\-ty
conditions for the equation \eqref{poisson_1} are given in Propositions \ref{convergence1} and
\ref{convergence2}.
A discussion of the definition of multivariate  martingale differences used in the present paper and
comments on the structure of the re\-pre\-sent\-a\-tion  \eqref{representation} and its role
in the investigation of the asymptotics of sums over the random field
can be found in Remarks \ref{condind} and \ref{comment}, respectively.\\
Applications to limit theorems will be presented in separate pub\-li\-ca\-ti\-ons
which are in preparation. In one of them, by M. Weber and the author
 \cite{GoWe2008}, a particular form of the re\-pre\-sen\-ta\-tion
from the present paper is applied to a problem considered in  \cite{FuPe2001} and
related to the so-called Baker sequences. Application of the  martingale
approach allows us to give a complete analysis of possible de\-ge\-ne\-ra\-tions
of the limit in this problem. The second paper, joint with H. Dehling and
M. Denker \cite{DeDeGo2008}, introduces a concept of $U-$ and $V-$statistics
of a measure preserving trans\-for\-ma\-tion and treats asymptotic results for them
 by means of a formalism parallel to that of the present paper; however, it
 is applied to some functional spaces, distinct from the $L_p$ spaces and chosen in
 accordance with the situation considered there.

\section{Notation and statements of results} \label{1}

 Let $T_1, \dots, T_d$ be commuting measure preserving transformations of a
 probability space $(X,\mathcal F, P).$ Denote by  $\Z^d_+$ the additive semigroup
 of $d-$dimensional coordinate vectors with non-negative integral entries. Then the relation  $\mathbf n\! = \!(n_1,
\dots,n_d) \mapsto T^{\mathbf n}=T_1^{n_1} \cdots T_d^{n_d},$ $ \mathbf n
\in \Z^d_+,$ defines a measure preserving action of the semigroup $\Z^d_+$
 on the space $(X,\mathcal F, P).$

Let $\mathcal S_d$ ($\mathcal S_{r,d}$) be the set of all subsets (correspondingly,
of all sets of cardinality  $ r \in [0,d]\,$) of the set  $\N(d)=\{1, \dots, d\}.$
Define for every $\mathit{S} \in \mathcal S_d$ a subsemigroup $\Z^{d,\mathit{S}}_+ \subseteq \Z^d_+$
by the relation $$\Z^{d,\mathit{S}}_+ =\{(n_1,\dots,n_d) \in \Z^d_+:n_k = 0\, \, \text{for all}\,  k \notin \mathit{S}\}.$$
For every  $p \in [1, \infty ]$ let $q=q(p)=p/(1-p)\in [1, \infty].$ Set
$$ U_k f=f\circ T_k$$
for every $f \in L_p= L_p(X,\mathcal F, P)$ and $k \in \N(d).$
 For every $p \in [1, \infty)$ and $k \in \N(d)$ let $ U_k^* $  denote
the conjugate of the operator $U_k.$ The operator $ U_k^* $ is acting on $L_q, 1<q \le \infty.$ The
same symbol $ U_k^* $ denotes an operator on $L_1$ such that
its conjugate is $U_k:L_{\infty} \to L_{\infty}$ (the existence of such an operator follows easily
from the measure-preserving character of  $T_k$).
Every operator $U_k$ is acting on every space $L_p$ as an isometry which preserves values of constant functions
and the cone of nonnegative functions.  Therefore,
$U_k^*$ is acting on every such space as a contraction which preserves
nonnegativity and values of constants.
Furthermore, as was noticed in Sect. 1, for every $k \in \N(d)$ è $n \ge 0$
we have
\begin{equation} \label{relations}
U_k^{*n} U_k^{n}=I,
U_k^{n} U_k^{*n}= E^{T_k^{-n}\mathcal F}.
\end{equation}
If for every  $ i,j \in \N(d),$ $ i \neq j,$ we also have
$$ U_iU_j^*=U_j^*U_i, $$
then the transformations $T_1, \dots, T_d$ are said to be  \linebreak
{\em com\-ple\-te\-ly} {\em com\-muting}.  This property, unlike commutativity, depends on the
probability measure $P.$ It implies that the conditional ex\-pec\-ta\-tions
$$\bigl(E^{T_k^{-n}\mathcal F}\bigr)_{n \ge 0, \,
k \in \N(d)}\,\,$$
mutually commute as well.
Let us set for $n \ge 0$ and  $k \in \N(d)$
$$ \mathcal F_k^n=T_k^{-n} \mathcal F, E_k^n=E^{\mathcal F_k^n},$$
and
 $$\mathcal F_k^{\infty}= \cap_{n=0}^{\infty}\mathcal F_k^n, E_k^{\infty}=E^{\mathcal F_k^{\infty}}. $$
The above commutativity of conditional expectations extends, by passing to the limit, to the family
 $$\bigl(E^n_k\bigr)_{0 \le n \le \infty,\,
k=1,\dots,d}\,\,.$$
 Further, let $\overline{\Z^d_+} $ be a completion of $ \Z^d_+ $,  whose elements $\mathbf n \! = \!(n_1,
\dots,n_d)$ have entries $n_1,
\dots,n_d$  with possible values $0,1, \dots, \infty$. Endow $\overline{\Z^d_+} $ with a natural partial order
extending that of $ \Z^d_+ .$
For every $\mathbf n \! = \!(n_1,
\dots,n_d) \in \overline{\Z^d_+}$  we set
 $$\F^{\mathbf n}= \bigcap_{k=1}^d \F_k^{n_k}, \  E^{\mathbf n}=E^{\F^{\mathbf n}}, $$
 and obtain
$$ E^{\mathbf n}= \prod_{k=1}^d E^{n_k}_k .$$
\begin{remark} \label{condind} Let for $\mathbf m \! = \!(m_1,
\dots,m_d)$, $ \mathbf n \! = \!(n_1,
\dots,n_d),$ $\mathbf m, \mathbf n \in  \overline{\Z^d_+},$
 the relation $\mathbf m \le  \mathbf n $ means, by definition, that
 $m_1 \le n_1,\dots,$ $ m_d \le n_d.$ It is clear that $\F^{\mathbf n} \subseteq \F^{\mathbf m}$
 whenever $\mathbf m \le  \mathbf n .$ Thus, $$\bigl( \F^{\mathbf n}\bigr)_{\mathbf n \in \overline{\Z^d_+}}$$
 is a decreasing filtration parametrized by a partially ordered set $\overline{\Z^d_+}.$ Let $\bigvee $ be the binary operation of taking the coordinatewise maximum in $\overline{\Z^d_+}$.  The commutation relations
 between conditional expectations observed above have the following probabilistic meaning: assume that
 $\mathbf l, \mathbf m, \mathbf n \in  \overline{\Z^d_+}$ and
 $ \mathbf n =  \mathbf l \bigvee \mathbf m,$
 then the  $\s-$fields
 $\F^{\mathbf l}$ è $\F^{\mathbf m}$ are conditionally independent given $\F^{\mathbf n}.$
 Such a property of a filtration (rather for the increasing case than for the decreasing one as in our setup)
 is well-known in the literature (see, for example, \cite{CaWa1975}).   \\
 We will discuss now the definition of reversed martingale differences we choose in this paper.
 We are led to this definition by Proposition \ref{martingale+coboundary} below. A family
$\bigl(\xi_{\mathbf n},
 \F^{\mathbf n}\bigr)_{\mathbf n \in \Z^d_+}$ of random variables defined on $(X,\mathcal F, P),$ and
 sub-$\s$-fields of $\mathcal F,$ is said to be a family of \textit{reversed martingale differences} if
 we have
 \begin{enumerate}
 \item{ for every $\mathbf n \in \Z^d_+$ the random variable $\xi_{\mathbf n}$ is measurable with respect to
 $\F^{\mathbf n}$;}
  \item{ $E^{ \F^{\mathbf m}} \xi_{\mathbf n} =0$ whenever $\mathbf m \nleq \mathbf n .$}
 \end{enumerate}
 This definition without changes applies to any partially ordered set instead of $\Z^d_+.$ Like the
 above conditional independence assumption, it also can be found in the literature. Indeed, in the paper
 \cite{CaWa1975}, which is devoted to stochastic integrals and martingales in ${\mathbb R}^2,$ the concepts of 
 $1$- and $2$-martingales, among several others, are introduced. In the case $d=2$ the definition given above is an
 analogue (for discrete and reversed ''time'') of the property of a random field to be a $1$- and a $2$-martingale
 simultaneously. Comparing the requirements imposed by the definition given above, we see, for example, that it is
 less restrictive than the one given in \cite{BaDo1979}, and more restrictive than the definition in â \cite{Leone1977}.
 Conditions imposed on the filtration is a separate question. As was noticed above, in the setup of the present paper a
 rather special  property of conditional independence holds.
\qed
\end{remark}
{\bf From now on we assume in this paper that the trans\-for\-ma\-tions $T_1, \dots, T_d$
are completely commuting.} \\

For $\mathbf n = (n_1, \dots, n_d) \in \Z^d_+$ we set
$$U^{\mathbf n}= U_1^{n_1} \cdots U_d^{n_d}, \, \, U^{* \mathbf n}=(U^{\mathbf n})^*.$$
For every $ \mathit{S} \in \mathcal S_d$ denote by $\I_\mathit{S}$ the $\s-$field
 of those $A \in \F$ for which the relation $T_k^{-1} A= A$ holds for every $k \in \mathit{S},$ and let
 $E^{\I_\mathit{S}}$ be the corresponding conditional expectation. Notice that for the empty set
 $\emptyset$  $\I_{\emptyset}=\F$ and $E^{\I_{\emptyset}}=I.$  Write $\I_k$
 instead of $\I_{\{k\}}$ for $ k \in \N(d).$
Set $\mathbf {1}_d= (1, \dots,1) \in \Z_+^d$.
 For $\mathbf N=(N_1,\dots,N_d)\in \Z_+^d$ set
$$ S_{\mathbf N} =\! \sum_{0 \le \mathbf n \le \mathbf N - {\mathbf 1}_d} U^{\mathbf n}, \, S_{\mathbf N}^* =\!\sum_{0 \le \mathbf n \le \mathbf N - {\mathbf 1}_d} U^{*\mathbf n},$$
if $\mathbf {1}_d \le \mathbf N,$ and $$ S_{\mathbf N} =0,\, S_{\mathbf N}^* =0 $$ otherwise.

 The following assertion presents a multivariate  analogue of the re\-pre\-sen\-ta\-tion in the form of a
 sum of a martingale difference and a coboundary which was discussed in Section 1. Comments on this
 multi\-va\-ri\-ate representation are given  in Remark \ref{comment} below.
\begin{proposition} \label{martingale+coboundary} Let  $1\le p \le \infty,$ and let for a function
$f \in L_p$ a function $g \in L_p$ solves the  equation
\begin{equation} \label{poisson_1}
f=\bigl(\prod_{k=1}^d (I-U^*_k)\bigr) g.
\end{equation}
 Then $f$ can be represented in the form
\begin{equation}\label{representation}
 f=\sum_{\mathit{S} \in \mathcal S_d} \bigl(\prod_{k \in
\mathit{S}}(U_k-I)\prod_{l \notin
\mathit{S}}(I-U_lU^*_l)\bigr)h_\mathit{S},
\end{equation}
 where for every $\mathit{S} \in \mathcal S_d$ the function $h_{\mathit{S}} \in L_p$  is
  defined by the relation
\begin{equation}\label{formula2}
h_{\mathit{S}}=\bigl(\prod_{m \in \mathit{S}}U^*_m\bigr) g \,.
\end{equation}
Conversely, if for some $g \in L_p$ a function $f \in L_p$ admits the re\-pre\-sen\-ta\-tion \eqref{representation}
with functions $h_{\mathit{S}}$ defined by the relations \eqref{formula2}, then $g$ is a solution of the equation  \eqref{poisson_1}.\\
Let a function $f \in L_p$ admits two representations of the form  \eqref{representation} with the
function
$(h_{\mathit{S}})_{\mathit{S} \in \mathcal S_d} $ and $(h'_{\mathit{S}})_{\mathit{S} \in \mathcal S_d} ,$
correspondingly (now it is not  a priori assumed that any relations of the type of \eqref{formula2} hold). Then for every
 $\mathit{S} \in \mathcal S_d$
$$ \bigl(\prod_{k \in \mathit{S}}(U_k-I)\prod_{l \notin \mathit{S}}(I-U_lU^*_l)\bigr)h'_{\mathit{S}}=\bigl(\prod_{k \in \mathit{S}}
(U_k-I)\prod_{l \notin \mathit{S}}(I-U_lU^*_l)\bigr)h_{\mathit{S}}. $$
\end{proposition}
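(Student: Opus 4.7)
The plan is to handle the existence of the representation, the converse, and the uniqueness in turn; the first two share a single operator identity and are purely algebraic, while uniqueness goes by induction on $d$.

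For the existence claim I start from the one-variable identity
\begin{equation*}
I - U_k^{*} \;=\; (I - U_kU_k^{*}) \;+\; (U_k - I)U_k^{*},
\end{equation*}
valid for every $k$ by the isometry relation $U_k^{*}U_k = I$. Taking the product over $k=1,\dots,d$ and expanding gives $2^d$ summands, indexed by the subset $S \subseteq \N(d)$ of coordinates at which the second piece was chosen. Complete commutativity lets me slide every $U_m^{*}$ ($m \in S$) past all factors indexed by $l \neq m$, so the $S$-summand collects to $\bigl(\prod_{k\in S}(U_k-I)\prod_{l \notin S}(I - U_lU_l^{*})\bigr)\bigl(\prod_{m \in S}U_m^{*}\bigr)$. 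Applying this operator identity to $g$ immediately delivers \eqref{representation} with $h_S$ as in \eqref{formula2}; reading the identity in reverse gives the converse.

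For uniqueness, set $\delta_S := h_S - h'_S$ and $A_S := \prod_{k\in S}(U_k - I)\prod_{l \notin S}(I - U_lU_l^{*})$; the goal is that $\sum_S A_S\delta_S = 0$ forces $A_S\delta_S = 0$ for each $S$. I induct on $d$. The base case $d=1$ reads $(I - U_1U_1^{*})\delta_\emptyset + (U_1 - I)\delta_{\{1\}} = 0$. Applying $U_1U_1^{*}$ annihilates the first summand and, using $U_1^{*}U_1 = I$, rewrites the second as $U_1(I - U_1^{*})\delta_{\{1\}}$; injectivity of the isometry $U_1$ yields $U_1^{*}\delta_{\{1\}} = \delta_{\{1\}}$, and the coincidence $\Ker(I - U_1^{*}) = \Ker(I - U_1)$ in $L_p$ upgrades this to $U_1\delta_{\{1\}} = \delta_{\{1\}}$, i.e.\ $A_{\{1\}}\delta_{\{1\}} = 0$; the original equation then forces $A_\emptyset\delta_\emptyset = 0$. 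For the inductive step I separate the sum by whether $d \in S$, use the factorisations $A_S = (U_d - I) A_{S \setminus \{d\}}^{(d-1)}$ for $d \in S$ and $A_S = (I - U_dU_d^{*}) A_S^{(d-1)}$ for $d \notin S$ (where $A_{\bullet}^{(d-1)}$ is the analogous operator built from $T_1,\dots,T_{d-1}$ alone), and collect the equation into the one-variable form $(U_d - I) F^{+} + (I - U_dU_d^{*})F^{-} = 0$ with $F^{\pm}$ given by $(d-1)$-dimensional sums. The base case applied in the coordinate $d$ splits this into $(U_d - I)F^{+} = 0$ and $(I - U_dU_d^{*})F^{-} = 0$; complete commutativity then moves $U_d - I$ and $I - U_dU_d^{*}$ past each $A_{S'}^{(d-1)}$, exposing two vanishing $(d-1)$-dimensional sums to which the inductive hypothesis applies termwise.

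The step I expect to be the main obstacle is the fixed-point coincidence $\Ker(I - U_k^{*}) = \Ker(I - U_k)$ inside $L_p$, used at the base of the induction. On $L_2$ this is the standard Hilbert-space identity between the kernels of a contraction and its adjoint; for general $p \in [1,\infty]$ one has to invoke the mean ergodic theorem for the positive contraction $U_k^{*}$, whose Ces\`aro averages converge to the conditional expectation onto the $\s$-field of $T_k$-invariant sets, in order to identify the fixed-point set of $U_k^{*}$ with the $T_k$-invariant functions. Everything else is bookkeeping justified by the commutation relations \eqref{relations} and the standing complete-commutativity assumption.
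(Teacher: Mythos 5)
Your proof is correct. For the existence of the representation and for the converse you use exactly the paper's argument: expand $\prod_{k}(I-U^*_k)=\prod_k\bigl[(I-U_kU^*_k)+(U_k-I)U^*_k\bigr]$, index the $2^d$ resulting terms by the subset $\mathit{S}$, and use complete commutativity to push the factors $U^*_m$, $m\in \mathit{S}$, to the right; this is precisely the chain \eqref{martcob}, read in both directions. Where you genuinely diverge is the uniqueness part. The paper runs a downward induction on the cardinality of $\mathit{S}$: at each step it applies all products $\prod_{k\in \mathit{S}}U^*_k$ of a given length to the residual identity, annihilates every summand containing a factor $(I-U_lU^*_l)$ with $l\in\mathit{S}$ via $U^*_l(I-U_lU^*_l)=0$, and then converts the surviving relation involving $\prod_{k\in\mathit{S}}(I-U^*_k)$ into the asserted one with $U_k-I$ by means of the product-kernel identity \eqref{relation3} of Lemma \ref{projection}. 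You instead induct on the dimension $d$, isolating the last coordinate to reach the one-dimensional equation $(U_d-I)F^{+}+(I-U_dU^*_d)F^{-}=0$; this requires only the single-operator coincidence $\Ker(I-U^*_d)=\Ker(I-U_d)$ together with injectivity of the isometry $U_d$, after which the inductive hypothesis is applied to the families $\bigl((U_d-I)\delta_{S'\cup\{d\}}\bigr)_{S'}$ and $\bigl((I-U_dU^*_d)\delta_{S'}\bigr)_{S'}$ --- a step you correctly justify by complete commutativity. Your organization is arguably tidier, since it bypasses \eqref{relation3} for products of several factors, whose proof in the paper rests on the assertion that the kernel of a product of commuting operators is the closed sum of their kernels. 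One caveat: for the fixed-point coincidence itself you invoke the mean ergodic theorem for $U^*_k$, which works for $p<\infty$ but is not directly available in the $L_\infty$-norm; Remark \ref{ergod} of the paper gives a direct argument valid for all $p\in[1,\infty]$ (from $U^*_kf=f$ one gets $E^n_kf=U^n_kf$, hence $f$ is $\F^{\infty}_k$-measurable, and $U_k$, $U^*_k$ act there as mutually inverse isometries), or one can deduce the case $p=\infty$ from $p=1$ by inclusion.
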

\begin{remark} \label{comment}
It is clarified here the meaning of decomposition \eqref{representation} and the role of its components in the
asymptotics  of the sums
\begin{equation} \label{sum}
S_{\mathbf N} f=\! \sum_{0 \le \mathbf n \le \mathbf N- {\mathbf 1}_d} U^{\mathbf n}f.
\end{equation}
Let $\mathit{S} \in \mathcal S_{r,d}$. The summand
$$ A_{\mathit{S}}= \bigl(\prod_{k \in
\mathit{S}}(U_k-I)\prod_{l \notin
\mathit{S}}(I-U_lU^*_l)\bigr)h_{\mathit{S}}  $$
of the right-hand side of relation \eqref{representation} satisfies the equations
\begin{equation}
E^1_t A_{\mathit{S}}=0, t \notin \mathit{S}.
\end{equation}
To establish this fact, we represent $ A_{\mathit{S}} ,$ using the commutation relations, as $ A_{\mathit{S}}= \bigl(\prod_{l \notin
\mathit{S}}(I-U_lU^*_l)\bigr)B_{\mathit{S}}$ and then apply the relations $U_tU^*_t(I-U_tU^*_t)=0,$ $t \notin \mathit{S}.$ This implies 
that for every $\mathbf m \in \Z_+^d$ the fa\-mi\-ly 
$ \bigl(U^{\mathbf m+\mathbf n} A_{\mathit{S},}\mathcal F^{\mathbf m+\mathbf n}\bigr)_{\mathbf n \in \Z_+^{d,\mathbb N(d) \setminus \mathit{S}}}$ is a
$(d\!-\!r)-$dimensional random field of reversed martingale differences (in the sense of Remark \ref{condind}.)
The latter means that
for $\mathbf n \in \Z_+^{d,\mathbb N(d) \setminus \mathit{S}}$ the random variable  $U^{\mathbf m+\mathbf n} A_{\mathit{S}}$
is $\mathcal F^{\mathbf m+\mathbf n}$-me\-a\-sur\-able and satisfies the relations
$$E^{\mathbf m+\mathbf n +\mathbf e_l} U^{\mathbf m+\mathbf n} A_{\mathit{S}}=0,  l \notin \mathit{S}.  $$
Here we set $ \mathbf e_l=(\d_{l,1}, \dots, \d_{l,d}),$ where $\d_{l,m}=1$ for $l=m,$ and $\d_{l,m}=0,$ if $l\neq m.$ \\
Further, for every $\mathbf m \in \Z_+^d$ the family $ (U^{\mathbf m+\mathbf n} A_{\mathit{S}})_{\mathbf n \in \Z_+^{d, \mathit{S}}}$ is a
$r-$dimensional stationary random field of $r-$coboundaries, that is it has the form \linebreak
$ \bigl(U^{\mathbf n}\bigl(\prod_{k \in
\mathit{S}}(U_k-I)\bigr) C_{\mathit{S},\mathbf m})\bigr)_{\mathbf n \in \Z_+^{d, \mathit{S}}}.$  In particular, this
implies that the sums
$$ \sum_{\{\mathbf n =(n_1, \dots,n_d) \in \Z_+^{d, \mathit{S}}:\, n_k \in [0, N_k-1]\, \text{for every}\, k \in \mathit{S}\}} U^{\mathbf m+\mathbf n} A_{\mathit{S}} $$
are bounded in $L_p.$   Putting off  the analysis of distributions of the sums \eqref{sum} to another case, we will describe
now at the heuristic level the role played by decomposition \eqref{representation} in this issue. We have, in view of  \eqref{representation},
\begin{equation} \label{sum1}
\begin{split}
S_{\mathbf N} f&=S_{\mathbf N}\Bigl(\sum_{\mathit{S} \in \mathcal S_d} \bigl(\prod_{k \in
\mathit{S}}(U_k-I)\prod_{l \notin
\mathit{S}}(I-U_lU^*_l)\bigr)h_{\mathit{S}}\Bigr) \\
&=S_{\mathbf N}\sum_{\mathit{S} \in \mathcal S_d} A_{\mathit{S}}
= \sum_{\mathit{S} \in \mathcal S_d} S_{\mathbf N} A_{\mathit{S}}.
\end{split}
\end{equation}
Keeping  $\mathit{S}$ fixed, the behavior of the sums $S_{\mathbf N} A_{\mathit{S}}$ by $\mathbf N=(N_1, \dots, N_d) \to \infty$
depends on existence of the moments and some other properties of the summands. For $p \ge 2$ the above-mentioned cobounadry
properties of $A_{\mathit{S}}$ guarantee the boundedeness of $L_2$-norms of the random variables
 \begin{equation} \label{bound}
 \bigl(\prod_{l \notin \mathit{S}} N_l\bigr)^{-1/2}S_{\mathbf N} A_{\mathit{S}}.
 \end{equation}
 If $(T^{\mathbf n})_{\mathbf n \in \Z_+^d}$  is a mixing action and $\bigr(\prod_{l \notin
\mathit{S}}(I-U_lU^*_l)\bigr)h_{\mathit{S}}\neq 0$ (we will call such a function  $f$ $\mathit{S}$-{\em nondegenerate}), the $L_2$-norms of such variables have a (finite) positive limit. Moreover, notice (though we do not need it
in the present paper) that these variables converge in distribution to a centered Gaussian law whose
variance is the square  of this limit.  In case of $\emptyset-$non-degeneracy of $f,$ the summand $$ S_{\mathbf N} A_{\emptyset}=S_{\mathbf N} \bigl(\prod_{l =1}^d(I-U_lU^*_l)\bigr)g $$ dominates in the sum $\sum_{\mathit{S} \in \mathcal S_d}S_{\mathbf N} A_{\mathit{S}}.$
 Indeed, denoting by $|\cdot|_2$  the $L_2$-norm, we obtain
\begin{equation} \label{variance}
 \s^2_{\emptyset}(f)= \bigl|\bigl(\prod_{l =1}^d(I-U_lU^*_l)\bigr)g \bigr|^2_2 \, \, \Bigl(=\sum_{r=0}^d (-1)^r\sum_{\mathit{S} \in \mathcal S^{r,d}}
 \bigl|\prod_{k \in \mathit{S}} U^*_k g \bigr|_2^2\Bigr).
 \end{equation}
 Since the reversed martingale differences
 $\bigl( U^{\mathbf n}\bigl(\prod_{l =1}^d(I-U_lU^*_l)\bigr)g \bigr)_{\mathbf n \in \Z_+^d}$
 are mutually orthogonal, we have for $\mathbf N=(N_1, \dots, N_d)$
 $$\bigl| S_{\mathbf N} A_{\emptyset}\bigr|^2_2= \bigl(\prod_{k=1}^d N_k\bigr) \s^2_{\emptyset}(f).$$
 Comparing this amount with \eqref{bound} for $\mathit{S} \neq \emptyset$, it is clear that $S_{\mathbf N} A_{\emptyset}$ do\-mi\-na\-tes
 in the sums $S_{\mathbf N}f$ as $\mathbf N \to \infty$ whenever $ \s^2_{\emptyset}(f )> 0$.
  This fact is crucial when one proves limit theorems for sums  $S_{\mathbf N} f$ by reduction the problem to the case of
  reversed martingale differences. It also shows that  $ \s^2_{\emptyset}(f )$ does not depend on the choice of
  the representation of the type of  \eqref{representation}, and that the notation introduced above is consistent. Moreover, it is
  clear that the random variable $A_{\emptyset}$ generating a $d-$di\-men\-si\-o\-nal field
 of reversed martingale differences is uniquely de\-ter\-min\-ed. This analysis of the asymptotics can be
 continued to obtain the uniqueness of summands in the representation of the type \eqref{representation} on the way
 distinct from that taken in the proof of Proposition  \ref{martingale+coboundary}.
 \qed
\end{remark}
In the rest of the present section conditions for solvability of the equation
 \eqref{poisson_1} are discussed, and a description of the set of its solutions is given.

 The following remark will be needed in the course of the proof of the
 Proposition \ref{convergence1} to identify the limit in the statistical ergodic
 theorem for the operators  $U_k^*.$
 \begin{remark} \label{ergod} Here some general properties  of the actions
 under con\-si\-dera\-tion are summarized. Since the transformations
 $T_1, \dots, T_d$ commute, the conditional expectations  $E^{\I_\mathit{S}}, \mathit{S} \in  \mathcal S_d,$
 commute as well. Notice that for $k \in \N(d)$ \begin{equation} \label{ergincl}
 \I_k \subseteq \F^{\infty}_k.
 \end{equation}
 Let us make clear the interrelation between the invariant elements of the
 operators $U_k$ and $U^*_k$. Assume that for some  $k \in \N(d)$ a certain $ f \in L_p$ satisfies
$U_k f = f. $ Apply  $U^*_k$ to the both parts of the last equation. Then the relation  $U^*_kU_k=I$
yields  $ U^*_kf=f.$
Conversely, let $U_k^* f = f. $ Then for all $n \ge 0$  $U^{*n}_k f = f ,$
$U_k^nU^{*n}_k f = U_k^n f $ and, consequently, $E^n_kf= U_k^n f.$ Since the operator
$ U_k$ is an isometry, the conditional expectation in the left-hand side preserves the $L_p$-norm
of $f,$ which is only possible if the expectation acts on $f$ identically. The latter
means that $f$ is $\mathcal F_k^n-$measurable. Since $n$ is arbitrary, $f$ is
$ \mathcal F_k^{\infty}-$measurable. Further, it follows from the relations between
$U_k$ and $U^*_k$ that they act on the space of $ \mathcal F_k^{\infty}$-measurable  $L_p-$functions
as mutually inverse isometries which implies $U_k f = f .$
Therefore, the operators $U_k$ and $U_k^*$ have the same invariant elements in the spaces  $L_p.$
The same conclusion also holds for every $\mathit{S} \in \mathcal S_d$ for jointly invariant elements
of every of two sets of operators:  $\{U_k:k \in \mathit{S}\}$ and $\{U^*_k: k \in \mathit{S}\}.$
Hence, we have for every $\mathit{S} \in \mathcal S_d$ and every
$p \in [1, \infty]$ that
$$\{f\!\in L_p\!:\!U^*_kf=f, k \!\in \mathit{S}  \}\!=\{f\!\in L_p\!:\!U_kf=f, k \!\in \mathit{S}  \}=$$
$$ =  \{f \! \! \in L_p\!:\! f
\,\text{is measurable with respect to}\, \I_\mathit{S}  \}. $$
\qed
\end{remark}
Let us call a function $g \in L_p$
{\em normal}, if
$$ g= \Bigl(\!\prod_{\, k \in \N(d)}\! (I- E^{\I_k})\Bigr)\,\,g.$$
Normality of $g \in L_p$ is equivalent to
$$ E^{\I_k}g=0, k \in \N(d). $$
Denote by $\Ker (A)$ and $\Ran (A)$ the kernel and the image of a linear operator $A$, respectively .
\begin{proposition} \label{convergence1}
Let  $ p \in [1, \infty].$
\begin{enumerate}
\item Every $f \in L_p$, for which equation \eqref{poisson_1} has a solution $g \in L^p $,
is normal.
\item A function $g \in L_p$ is a solution of equation \eqref{poisson_1} if and only if it can be
represented in the form $g=g'+e,$ where $g'$ is a normal solution of equation \eqref{poisson_1} and $ e \in
\Ker\bigl( \prod_{\, k \in \N(d)}\! (I- E^{\I_k})\bigr).$
Equation \eqref{poisson_1} has at most one normal solution.
\item Let $p <\infty$ and $f \in L_p$ is a normal function. Equation
\eqref{poisson_1} has a solution in $L^p $ if and only if
the limit
\begin{equation} \label{series1}
\underset{\mathbf N=(N_1,\dots,N_d) \to \infty}{\lim}(N_1 \cdots N_d)^{-1}\!\sum_{0 \le \mathbf M \le \mathbf N - {\mathbf 1}_d }
\,  S_{\mathbf M}^* f
\end{equation}
exists in the $L_p$-norm. This limit represents a normal solution of
equation \eqref{poisson_1}.
\end{enumerate}
\end{proposition}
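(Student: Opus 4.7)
The plan is to reduce everything to three ingredients that are already visible in the excerpt: the commutation relations $[E^{\I_k},U_j]=[E^{\I_k},U_j^*]=0$, which follow from complete commutativity by the argument at the end of Remark \ref{ergod}; the identity $E^{\I_k}U_k^*=E^{\I_k}$, expressing that $U_k^*$ is the adjoint of $U_k$ and $U_k$ fixes $\I_k$-measurable functions (Remark \ref{ergod}); and the $L_p$ mean ergodic theorem for the Markov operator $U_k^*$ with projection $E^{\I_k}$ (valid for $1\le p<\infty$, again by Remark \ref{ergod}). Part (1) is then immediate: applying $E^{\I_k}$ to $f=\prod_l(I-U_l^*)g$ and using $E^{\I_k}(I-U_k^*)=E^{\I_k}-E^{\I_k}=0$ together with the commutativity of the remaining factors yields $E^{\I_k}f=0$ for every $k$, i.e., $f$ is normal.

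For part (2), set $g'=\prod_k(I-E^{\I_k})g$ and $e=g-g'$; then $g'$ is normal and $e\in\Ker\prod_k(I-E^{\I_k})$ by construction. The identity $(I-U_k^*)E^{\I_k}=0$ (since $U_k^*$ fixes $\I_k$-measurable functions) gives $(I-U_k^*)(I-E^{\I_k})=(I-U_k^*)$, whence $\prod_k(I-U_k^*)g'=\prod_k(I-U_k^*)g=f$, so $g'$ is a normal solution. For the other direction, expanding $I=\prod_k[(I-E^{\I_k})+E^{\I_k}]$ and isolating the all-$(I-E^{\I_k})$ term identifies $\Ker\prod_k(I-E^{\I_k})$ with $\sum_{k=1}^d\Ran E^{\I_k}$; any $e=\sum_k e_k$ with each $e_k$ being $\I_k$-measurable is annihilated by $\prod_l(I-U_l^*)$ because the factor $(I-U_k^*)$ kills $e_k$. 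For uniqueness, let $g$ be a normal function with $\prod_k(I-U_k^*)g=0$. Telescoping $I-U_k^{*N_k}=(I-U_k^*)\sum_{n_k=0}^{N_k-1}U_k^{*n_k}$ componentwise and using commutativity of all factors yields $\prod_k(I-U_k^{*N_k})g=0$ for every $\mathbf N\in\Z_+^d$. Averaging this identity over $\mathbf 1_d\le\mathbf N\le(N_1^*,\dots,N_d^*)$ and factoring coordinatewise gives
$$\prod_k\Bigl(I-\frac{1}{N_k^*}\sum_{N_k=1}^{N_k^*}U_k^{*N_k}\Bigr)g=0,$$
and the mean ergodic theorem (for $p<\infty$) sends each factor strongly to $I-E^{\I_k}$, hence the product to $\prod_k(I-E^{\I_k})$. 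Thus $\prod_k(I-E^{\I_k})g=0$; since $g$ is normal the left-hand side equals $g$, and $g=0$. The case $p=\infty$ reduces to $p=2$ via $L_\infty\subseteq L_2$ on a probability space.

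For part (3), fix $f\in L_p$ normal with $p<\infty$. If $g$ is a solution, telescoping gives $S^*_{\mathbf M}f=\prod_k(I-U_k^{*M_k})g$ whenever $\mathbf 1_d\le\mathbf M$, while the remaining terms vanish by the definition of $S^*_{\mathbf M}$. Averaging over $0\le\mathbf M\le\mathbf N-\mathbf 1_d$ and factoring yields
$$(N_1\cdots N_d)^{-1}\!\!\sum_{0\le\mathbf M\le\mathbf N-\mathbf 1_d}S^*_{\mathbf M}f=\prod_k\Bigl(\tfrac{N_k-1}{N_k}I-\tfrac{1}{N_k}\sum_{M_k=1}^{N_k-1}U_k^{*M_k}\Bigr)g,$$
which converges in $L_p$ to $\prod_k(I-E^{\I_k})g$ by the coordinatewise mean ergodic theorem and uniform boundedness. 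This limit is the normal projection of $g$, which by (2) is the unique normal solution; so the limit \eqref{series1} exists. Conversely, assume the $L_p$ limit \eqref{series1} exists and call it $g$. Since $\prod_k(I-U_k^*)$ is bounded, applying it to the Cesaro averages and invoking the same telescoping identity with $f$ in the role of $g$ shows that $\prod_k(I-U_k^*)g$ is the $L_p$ limit of the averages of $\prod_k(I-U_k^{*M_k})f$, which equals $\prod_k(I-E^{\I_k})f=f$ as $f$ is normal. Normality of $g$ follows from $E^{\I_k}S^*_{\mathbf M}f=S^*_{\mathbf M}E^{\I_k}f=0$ by passing to the limit; so $g$ is the (unique) normal solution.

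The real obstacle is the uniqueness in (2), where one must pass from the algebraic identity $\prod_k(I-U_k^*)g=0$ to the ergodic identity $\prod_k(I-E^{\I_k})g=0$; the telescoping-plus-Cesaro-averaging argument is the natural bridge. Once this is in hand it powers both the uniqueness in (2) and the forward direction of (3), with the latter obtained by running the same averaging on $f$ rather than on $g$. The minor care items are the commutation $E^{\I_k}U_k^*=E^{\I_k}$ (the only genuinely non-formal use of complete commutativity) and the reduction $L_\infty\subseteq L_2$ that handles $p=\infty$ in (2).
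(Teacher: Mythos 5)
Your argument is correct, and for part (1), part (3), and the decomposition in part (2) it is essentially the paper's own proof: the same identity $E^{\I_k}U_k^*=E^{\I_k}$, the same telescoping $S^*_{\mathbf M}f=\prod_k(I-U_k^{*M_k})g$, and the same Ces\`aro averaging followed by the multiparameter mean ergodic theorem (the paper merely expands $\prod_k(I-E^{\I_k})$ by inclusion--exclusion into $\sum_r(-1)^r\sum_{\mathit{S}}E^{\I_{\mathit{S}}}$ where you keep the product factored). The one place where you genuinely diverge is the uniqueness of the normal solution in (2): the paper deduces it from relation \eqref{relation3} of Lemma \ref{projection}, i.e.\ $\Ker\bigl(\prod_k(I-U_k^*)\bigr)=\Ker\bigl(\prod_k(I-E^{\I_k})\bigr)$, whose proof there rests on the general assertion that the kernel of a product of commuting operators is the closure of the sum of their kernels. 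You instead prove the only inclusion actually needed, $\Ker\bigl(\prod_k(I-U_k^*)\bigr)\subseteq\Ker\bigl(\prod_k(I-E^{\I_k})\bigr)$, directly by the telescoping-plus-averaging device and handle $p=\infty$ by passing to $L_2$; this is more self-contained (it reuses machinery you need anyway for (3)) and sidesteps the delicate kernel claim. Your explicit identification $\Ker\prod_k(I-E^{\I_k})=\sum_k\Ran E^{\I_k}$ for the ``if'' direction of (2) is likewise a small self-contained substitute for the paper's appeal to Lemma \ref{projection}. The only point to state with a bit more care is the commutation $[E^{\I_k},U_j^*]=0$ for $j\neq k$ (needed for the normality of the limit in (3)): it does not literally come from the computation at the end of Remark \ref{ergod}, but follows, as the paper implicitly uses, from representing $E^{\I_k}$ as the strong limit of the averages $N^{-1}\sum_{n<N}U_k^{*n}$, each of which commutes with $U_j^*$.
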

Substituting the normality assumption by a stronger condition, one can simplify the
solvability criterion of \eqref{poisson_1} and the procedure of constructing its
solution. Let us call a function $f \in L_1$ {\em strictly normal}
whenever
 $ f = \prod_{k \in \N_d}(I-E^{\infty}_k) f$ (an equivalent property says that for all $k \in \N(d)$ $E^{\infty}_kf=0.$)
The strict normality is stronger than the normality because
 $\I_k \subset \F^{\infty}_k$ (recall that  $E^{\infty}_k= E^{\F^{\infty}_k}$).

 The strict normality of $f \in L_p$ can be characterized by any of the following
 properties (where convergence is assumed in the sense of the $L_p-$norm):
 \begin{equation}
 E^{(n_1, \dots, n_d)} f \underset{\max(n_1, \dots, n_d) \to \, \infty}{\rightarrow} 0,
 \end{equation}
 or
 \begin{equation}
  \label{strict} U^{*(n_1, \dots, n_d)} f \underset{ \max(n_1, \dots, n_d) \to \, \infty}{\to} 0.
  \end{equation}
\begin{proposition} \label{convergence2}
Let $ p \in [1, \infty).$
\begin{enumerate}
\item If a function $f \in L_p$
can be represented as
\eqref{poisson_1}, where $ g \in  L_p$ is strictly normal, then $f$
is strictly normal, and $g$ can be represented in the form
\begin{equation} \label{series}
g = \sum_{\mathbf n \in \Z_+^d} U^{*\mathbf n} f\, \, \Bigl(\,\overset{\text{def}}{=}\!\underset{\mathbf N=(N_1,\dots,N_d) \to \infty}{\lim}
S^*_{\mathbf N}f\Bigr),
\end{equation}
where the series converges in the $L_p$-norm.
 Equation \eqref{poisson_1} has at most one strictly normal
 solution in $L_p$.
 \item
 Conversely, let for some strictly normal function $f \in L_p$ series \eqref{series}
 converges in the $L_p$-norm. Then the sum of series \eqref{series} presents a strictly
 normal solution of equation \eqref{poisson_1}.
\item For every strictly normal function $f \in L_p$  it follows from the convergence of series \eqref{series}
that for every  $ \mathit{S} \in \mathcal \mathit{S}_d$ the series
 \begin{equation} \label{seriesS}
\sum_{\mathbf n \in \Z_+^{d,\mathit{S}}} U^{*\mathbf n} f\, \, \Bigl(\,\overset{\text{def}}{=}\!\underset{N_k \to \infty,\, k \in \mathit{S}}{\lim} \sum_{\mathbf n \in \Z_+^{d,\mathit{S}}} U^{*\mathbf n} f\Bigr)
\end{equation}
converges in the $L_p-norm$.
\end{enumerate}

\end{proposition}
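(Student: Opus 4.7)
My plan is to reduce all three parts to the telescoping identity
\begin{equation*}
S_{\mathbf N}^*\prod_{k=1}^d(I-U_k^*)g=\prod_{k=1}^d(I-U_k^{*N_k})g,
\end{equation*}
valid whenever $\mathbf N\in\Z_+^d$ has all entries positive, together with the $L_p$-characterization \eqref{strict} of strict normality.

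\textbf{Part 1.} I would first check strict normality of $f=\prod_k(I-U_k^*)g$. For $l\neq k$ the completely-commuting hypothesis gives $E_k^\infty U_l^*=U_l^*E_k^\infty$; for $l=k$ the relation $U_k^*U_k=I$ yields $E_k^nU_k^*=U_k^nU_k^{*(n+1)}=U_k^*E_k^{n+1}$, so sending $n\to\infty$ gives $E_k^\infty U_k^*g=U_k^*E_k^\infty g=0$. Thus $E_k^\infty f=0$ for every $k$. The telescoping identity then yields $S_{\mathbf N}^*f=\prod_k(I-U_k^{*N_k})g$; expanding the product, every summand other than $g$ carries at least one factor $U_k^{*N_k}$ and so tends to $0$ in $L_p$ by \eqref{strict} applied to $g$. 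Hence $S_{\mathbf N}^*f\to g$ in $L_p$. Uniqueness follows at once: if $g,g'$ are two strictly normal solutions, then $\prod_k(I-U_k^*)(g-g')=0$, and the formula just proved gives $g-g'=\lim S_{\mathbf N}^*0=0$.

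\textbf{Part 2.} Set $g:=\lim_{\mathbf N\to\infty}S_{\mathbf N}^*f$ in $L_p$. Applying the bounded operator $\prod_k(I-U_k^*)$ and using $(I-U_k^*)\sum_{n=0}^{N_k-1}U_k^{*n}=I-U_k^{*N_k}$ in each factor produces
\begin{equation*}
\prod_{k=1}^d(I-U_k^*)g=\lim_{\mathbf N\to\infty}\prod_{k=1}^d(I-U_k^{*N_k})f,
\end{equation*}
and strict normality of $f$ kills every summand containing some $U_k^{*N_k}$, leaving $f$ on the right. For strict normality of $g$, I would apply $E_k^\infty$ to $S_{\mathbf N}^*f$ and use the fact, proved exactly as in Part 1 with $f$ in place of $g$, that $E_k^\infty U_k^{*n}f=0$ for every $n\ge 0$; this forces $E_k^\infty g=\lim E_k^\infty S_{\mathbf N}^*f=0$.

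\textbf{Part 3.} By Part 2 we may take $g$ to be the sum of \eqref{series}, which is strictly normal and satisfies $f=\prod_k(I-U_k^*)g$. Telescoping in the coordinates $k\in\mathit{S}$ factors the partial sum of \eqref{seriesS} as
\begin{equation*}
\prod_{k\in\mathit{S}}\sum_{n_k=0}^{N_k-1}U_k^{*n_k}\cdot f=\prod_{k\notin\mathit{S}}(I-U_k^*)\cdot\prod_{k\in\mathit{S}}(I-U_k^{*N_k})g.
\end{equation*}
Letting $N_k\to\infty$ for $k\in\mathit{S}$, every expanded term containing some $U_k^{*N_k}$ vanishes by \eqref{strict} applied to the strictly normal $g$, so the limit equals $\prod_{k\notin\mathit{S}}(I-U_k^*)g$ in $L_p$.

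\textbf{Main obstacle.} The only non-routine point is the preservation of strict normality by $U_k^*$, needed in Parts 1 and 2: $E_k^\infty$ does \emph{not} commute with $U_k^*$ in general, but the identity $E_k^nU_k^*=U_k^*E_k^{n+1}$ (obtained from $U_k^*U_k=I$) lets one pass the obstructing factor through at the cost of shifting the index, after which reverse-martingale convergence delivers the conclusion. Once this is in hand, all three parts reduce to the telescoping identity above and passage to the limit under \eqref{strict}.
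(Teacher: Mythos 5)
Your proposal is correct and follows essentially the same route as the paper: the telescoping identity $S_{\mathbf N}^*\prod_k(I-U_k^*)g=\prod_k(I-U_k^{*N_k})g$, expansion of the product, and passage to the limit via the characterization \eqref{strict} of strict normality, with part (3) obtained by telescoping only in the coordinates of $\mathit{S}$ (the paper phrases this as applying item (1) to the subsemigroup $\Z_+^{d,\mathit{S}}$ with $g'=\prod_{k\notin\mathit{S}}(I-U_k^*)g$, which is the same computation). Your explicit verification via $E_k^nU_k^*=U_k^*E_k^{n+1}$ that $U_k^*$ preserves strict normality is a welcome fleshing-out of a step the paper only asserts.
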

\begin{remark} For $d \ge 2$ the convergence of series \eqref{series} seemingly does not imply that \eqref{strict} holds (that is that $f$ is strictly normal). However, one can omit in assertions (2) and
(3) of Proposition \ref{convergence2}  the assumption that $f$ is strictly normal, if one assumes instead, in addition to the convergence of  \eqref{series}, the convergence of
 \eqref{seriesS} for every set $\mathit{S}$ of cardinality 1.
\qed
\end{remark}
\begin{remark} Propositions  \ref{convergence1} and  \ref{convergence2} can be extended to the case of
the space $L_{\infty}$, if one considers the convergence of the series in the $L_1$-topology of
the space $L_{\infty}$ instead of their convergence in the $L_{\infty}$-norm. Further, for $1<p< \infty$ the requirement
of the existence of the limit \eqref{series1} in Proposition \ref{convergence1} as a sufficient condition
of solvability  of equation \eqref{poisson_1} can be weakened to that of boundedness in $L_p$ of the corresponding
sequence of partial sums.
\end{remark}
\begin{ex}
 Let $d=2.$  If for a strictly normal function $f \in L_p$  the series
$\sum_{n_1, n_2 =0}^{\infty} U_1^{*n_1}U_2^{*n_2}f $ converges in the
$L_p$-norm, then $f$ admits a representation in the form
 $$f=
C_{\emptyset} +(U_1-I)C_1 +(U_2-I)C_2+(U_1-I)(U_2-I)C_{1,2}, $$ where
the functions $C_{\emptyset},C_1, C_2, C_{1,2} \in L_p$ are strictly normal and
$$E^{T_i^{-1}\mathcal F}C_{\emptyset}=0 \,
(i=1,2),$$ $$ E^{T_2^{-1}\mathcal F}C_1=0,E^{T_1^{-1}\mathcal
F}C_2=0.$$
If the transformations  $T_1$ and  $T_2$ are ergodic, then the strictly normal functions
$C_{\emptyset},C_1, C_2, C_{1,2}$ are uniquely determined.
\qed
\end{ex}

\section{Proofs} \label{2}

In the course of proofs of Propositions  \ref{martingale+coboundary},\,\ref{convergence1} and \ref{convergence2} the following assertion will be needed.
\begin{lemma} \label{projection} For every $\mathit{S} \in \mathcal S_d$ the following relations hold:
\begin{equation} \label{relation1}
\begin{split}
&\Bigl(\prod_{k \in \mathit{S}} (I- E^{\I_k})\Bigr)\!\Bigl( \prod_{k \in \mathit{S}}\! (I-U^*_k)\Bigr)\\
= \Bigl(\prod_{k \in \mathit{S}}\! &(I-U^*_k)\Bigr) \Bigl( \prod_{k \in \mathit{S}} (I- E^{\I_k})\Bigr)
=\prod_{k \in \mathit{S}}\! (I-U^*_k),
\end{split}
\end{equation}
\begin{equation} \label{relation2}
\begin{split}
&\Bigl(I \!- \! \!\prod_{k \in \mathit{S}} (I- E^{\I_k})\Bigr)\!  \Bigl(\prod_{k \in \mathit{S}}\! (I-U^*_k)\Bigr)\\
=& \Bigl(\! \prod_{k \in \mathit{S}} \!  (I-U^*_k)\Bigr)\, \Bigr(I-\prod_{k \in \mathit{S}} (I- E^{\I_k})\Bigr)
 = 0,
\end{split}
\end{equation}
\begin{equation} \label{relation3}
\begin{split}
\Ker\Bigl(\prod_{k \in \mathit{S}} (I-U^*_k)\Bigr) = \Ker\Bigl(\prod_{k \in \mathit{S}} (I-U_k)\Bigr)= \Ker \Bigl( \prod_{k \in \mathit{S}} (I- E^{\I_k})\Bigr).
\end{split}
\end{equation}
\end{lemma}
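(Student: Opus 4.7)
The plan is to reduce everything to single-variable identities and then exploit mutual commutativity of all the operators in question. For a single index $k$, Remark~\ref{ergod} gives that every $\I_k$-measurable function is fixed by $U_k^*$, so $U_k^* E^{\I_k} = E^{\I_k}$, and passing to adjoints yields $E^{\I_k} U_k^* = E^{\I_k}$; hence $E^{\I_k}(I - U_k^*) = (I - U_k^*) E^{\I_k} = 0$ and consequently $(I - E^{\I_k})(I - U_k^*) = (I - U_k^*)(I - E^{\I_k}) = I - U_k^*$. Next I would record that the $E^{\I_k}$'s mutually commute by Remark~\ref{ergod}; the $U_k^*$'s mutually commute under complete commutativity; and $E^{\I_l}$ commutes with $U_k^*$ for $k\neq l$, because the relation $U_l U_k^* = U_k^* U_l$ forces $U_k^*$ to send $U_l$-invariant (equivalently $\I_l$-measurable) functions to $\I_l$-measurable functions, whence $E^{\I_l} U_k^* = U_k^* E^{\I_l}$ by the standard duality argument.

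Relations \eqref{relation1} and \eqref{relation2} then follow immediately from the single-variable identities by regrouping. Pairing each factor $(I - E^{\I_k})$ with its matching $(I - U_k^*)$ yields $\prod_{k \in \mathit{S}}(I - E^{\I_k}) \prod_{k \in \mathit{S}}(I - U_k^*) = \prod_{k \in \mathit{S}}\bigl[(I - E^{\I_k})(I - U_k^*)\bigr] = \prod_{k \in \mathit{S}}(I - U_k^*)$, and symmetrically with the order reversed; subtracting from $\prod_{k\in \mathit{S}}(I-U_k^*)$ gives \eqref{relation2}.

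The main obstacle is Relation \eqref{relation3}. Set $P := \prod_{k \in \mathit{S}}(I - E^{\I_k})$, a projection since its factors are commuting projections. I would prove $\Ker\bigl(\prod_{k\in\mathit{S}}(I - U_k^*)\bigr) = \Ker P = \Ker\bigl(\prod_{k\in\mathit{S}}(I - U_k)\bigr)$ in two steps. The inclusion $\Ker P \subseteq \Ker(\prod(I - U_k^*))$ is immediate from \eqref{relation2}, since any $f \in \Ker P$ equals $(I-P)f$. For the reverse inclusion, assume $\prod(I - U_k^*) f = 0$ and decompose $f = Pf + (I - P)f$; since \eqref{relation2} kills the second summand, $\prod(I - U_k^*)Pf = 0$, and it suffices to show $\prod(I - U_k^*)$ is injective on $\Ran P$. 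Each factor $(I - U_k^*)$ preserves $\Ran P \subseteq \bigcap_l \Ker E^{\I_l}$ (using the commutation $E^{\I_l}U_k^* = U_k^* E^{\I_l}$) and acts injectively there, because by Remark~\ref{ergod} $\Ker(I - U_k^*) = \Ran E^{\I_k}$, which meets $\Ker E^{\I_k} \supseteq \Ran P$ only at $0$. Since the factors commute and each is an injective self-map of $\Ran P$, their product is injective on $\Ran P$, forcing $Pf = 0$. The identical argument with $U_k$ in place of $U_k^*$ gives the remaining kernel equality.
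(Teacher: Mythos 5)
Your proof is correct. For \eqref{relation1} and \eqref{relation2} it coincides with the paper's argument: the single-index identity $E^{\I_k}U_k^*=E^{\I_k}$ obtained by duality from $U_kE^{\I_k}=E^{\I_k}$, followed by taking the product over $k\in\mathit{S}$ and subtracting (you are somewhat more explicit than the paper in justifying the cross-commutations $E^{\I_l}U_k^*=U_k^*E^{\I_l}$ for $l\neq k$ via complete commutativity, which is a welcome addition). Where you genuinely diverge is \eqref{relation3}. The paper deduces it from the single-index equalities $\Ker(I-U_k^*)=\Ker(I-U_k)=\Ker(I-E^{\I_k})$ of Remark \ref{ergod} together with the general assertion that the kernel of a product of two commuting bounded operators is the closure of the sum of their kernels. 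You instead split $f=Pf+(I-P)f$ along the projection $P=\prod_{k\in\mathit{S}}(I-E^{\I_k})$, use \eqref{relation2} to annihilate the second summand, and show that $\prod_{k\in\mathit{S}}(I-U_k^*)$ is injective on $\Ran P$ because each factor preserves $\Ran P$ and has kernel $\Ran E^{\I_k}$, which meets $\Ker E^{\I_k}\supseteq\Ran P$ only in $0$. Your route is longer but entirely self-contained and elementary: it uses only the single-index facts of Remark \ref{ergod} plus projection algebra, and it works uniformly in every $L_p$, $1\le p\le\infty$. The paper's route is shorter but rests on a kernel-of-a-product fact that is not true for arbitrary commuting bounded operators (it fails already for a nonzero nilpotent of square zero) and so implicitly requires an ergodic-decomposition input that is most transparent in the reflexive range; your argument sidesteps that issue. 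One small point to tidy: for the equality $\Ker\bigl(\prod_{k\in\mathit{S}}(I-U_k)\bigr)=\Ker P$ you invoke ``the identical argument with $U_k$,'' which tacitly uses the analogue of \eqref{relation2} with $U_k$ in place of $U_k^*$; this does hold (from $E^{\I_k}U_k=E^{\I_k}$, the adjoint of $U_k^*E^{\I_k}=E^{\I_k}$), but it is not among the displayed relations of the Lemma, so you should state it explicitly.
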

\begin{proof}[Proof of Lemma 1]
For every $k\in $
$ \N(d)$ we have $E^{\I_k}U^*_k $ $= E^{\I_k}$
(this follows, for example, from the obvious identity $U_k E^{\I_k} = E^{\I_k}$
applied to the dual space).
This implies $(I-E^{\I_k})(I-U^*_k)= I-U^*_k.$
Taking the product of these relations over all $k \in \mathit{S}$
(the order of the multipliers, in view of their commutativity, is of no importance here) gives \eqref{relation1}.
Subtracting the both parts of  \eqref{relation1}  from  the operator $\prod_{k \in \mathit{S}}\! (I-U^*_k)$,
 \eqref{relation2} follows. \\
Let us prove \eqref{relation3}.
According to Remark \ref{ergod} for all $k \in \N(d)$ the relations
$ \Ker (I-U^*_k)=\Ker (I-U_{k})=\Ker (I- E^{\I_{k}})$ hold. Equalities \eqref{relation3} are consequences of
these relations for $k \in \mathit{S}$ and the fact that the kernel of the product of two commuting bounded operators
is the closure of the sum of their kernels.
\end{proof}
\begin{proof}[Proof of Proposition 1]
Since for $g$ holds \eqref{poisson_1}, we have
\begin{equation} \label{martcob}
\begin{split}
f =\bigl(\prod_{k=1}^d (I-U^*_k)\bigr) g = \bigl(\prod_{k=1}^d [(I-U_kU^*_k) +(U_k-I)U^*_k)]\bigr) g \\
=\sum_{\mathit{S} \in {\mathcal S}_d} \bigl(\prod_{k \in \mathit{S}}(U_k - I)\prod_{l \notin \mathit{S}}(I-U_lU^*_l)\prod_{m \in \mathit{S}}U^*_m\bigr)g\\
=\sum_{\mathit{S} \in {\mathcal S}_d} \bigl(\prod_{k \in \mathit{S}}(U_k-I)\prod_{l \notin \mathit{S}}(I-U_lU^*_l)\bigr)h_{\mathit{S}},
\end{split}
\end{equation}
where
$ h_{\mathit{S}} = \bigl(\prod_{m \in \mathit{S}}U^*_m\bigr)g.$ \\
Reversing this chain of equalities, we see that it follows from \eqref{representation} and \eqref{formula2} that
$g$ satisfies the relation \eqref{poisson_1}.\\
 Let us prove now the assertion about the uniqueness of the representation  \eqref{representation}.
 We set $H_{\mathit{S}}=h'_{\mathit{S}}-h_{\mathit{S}}.$ Subtracting one of the representations
 from another, the relation
\begin{equation} \label{unicity1}
\sum_{\mathit{S} \in {\mathcal S}_d} \bigl(\prod_{k \in \mathit{S}}(U_k-I)\prod_{l \notin \mathit{S}}(I-U_lU^*_l)\bigr)H_{\mathit{S}}=0,
\end{equation}
is obtained. It suffices now to deduce from \eqref{unicity1} that for every $\mathit{S} \in {\mathcal S}_d$
\begin{equation} \label{unicity2}
 \bigl(\prod_{k \in \mathit{S}}(U_k-I)\prod_{l \notin \mathit{S}}(I-U_lU^*_l)\bigr){H_\mathit{S}}=0.
 \end{equation}
At the initial step (we give number zero to it) we apply the operator $\prod_{k \in \Z_d}U^*_k$ to the both parts of equation \eqref{unicity1},
keeping in mind that operators with different indices commute while for operators
with the same index
the relations  $ U^*_l (I-U_lU^*_l)=0, U^*_l(U_l-I)=I-U^*_l, l \in \Z(d),$ hold. It
is  clear that
the operator $\prod_{k \in \Z_d}U^*_k$ vanishes on all summands in the left-hand side of \eqref{unicity1}, except for that for which  $\mathit{S} = \Z(d).$ This implies
$$ \bigl( \prod_{k \in \Z_d}(I-U^*_k)\bigr)H_{\Z(d)}  =0,$$
which follows, in view of the first of relations \eqref{relation3}, that
$$\bigl(\prod_{k \in \Z_d}(U_k-I)\bigr)H_{\Z(d)}  =0.$$
Therefore, the relation \eqref{unicity2} for $ \mathit{S} = \Z(d)$
 is obtained.
Subtracting this relation "of level $d$" \,from \eqref{unicity1},
the relation
\begin{equation} \label{unicity3}
\sum_{r=0}^{d-1}\sum_{\mathit{S} \in {\mathcal S}_{r,d}} \bigl(\prod_{k \in \mathit{S}}(U_k-I)\prod_{l \notin \mathit{S}}(I-U_lU^*_l)\bigr)H_{\mathit{S}}=0
\end{equation}
is established.
At the step one  each of $d$ products
$$U^*_2\cdots U^*_d, \,U^*_1 U^*_3 \cdots U^*_d,\dots,\, U^*_1 \cdots U^*_{d-1}$$
of $d\!-\!1$ operators is consequently applied to the equation obtained before. This gives the equalities
$$ \bigl(\prod_{k \in \mathit{S}}(I-U^*_k)\prod_{l \notin \mathit{S}}(I-U_lU^*_l)\bigr)H_{\mathit{S}}  =0, \mathit{S} \in \mathcal S_{d-1,d}.$$
Using again the relation \eqref{relation3}, we see that
$$ \bigl(\prod_{k \in \mathit{S}}(U_k-I)\prod_{l \notin \mathit{S}}(I-U_lU^*_l)\bigr)H_{\mathit{S}}  =0, \mathit{S} \in \mathcal S_{d-1,d}.$$
Therefore, we obtained such way every of $d$ equalities of level  $d-1$ from \eqref{unicity2}.
Subtract them from  \eqref{unicity3} and continue the process. At the $r$-th step
all those  $d \choose r$ relations from
 \eqref{unicity2} will be obtained, which correspond to $ \mathit{S} \in \mathcal S_{d-r,d}.$ The process finishes at the  $d$-th
 step by obtaining the relation \eqref{unicity2} for $\mathit{S}=\emptyset.$
\end{proof}
\begin{proof}[Proof of Proposition \ref{convergence1}]
  It follows from relations \eqref{poisson_1} è \eqref{relation1} that
  \[
\begin{split}f = & \Bigl(\prod_{l=1}^d \!(I-U^*_l)\Bigr)g
= \Bigl(\prod_{k \in \N(d)} (I- E^{\I_k})\Bigr)
\Bigl(\prod_{k \in \N(d)} (I-U^*_k)\Bigr)g
\\= &\Bigl(\prod_{k \in \N(d)} (I- E^{\I_k})\Bigr)f ,
\end{split}
\]
which proves (1). \\
Let now $g \in L_p$ be a certain solution of \eqref{poisson_1}. Set $g'= \bigl(\prod_{k \in \N(d)} (I- E^{\I_k})\bigl) g$ and
$e=\bigl(I-\prod_{k \in \N(d)} (I- E^{\I_k})\bigl)g$. It follows from \eqref{relation1} by $\mathit{S}=\N(d)$ that
 $g'$ is a solution of  \eqref{poisson_1}. This solution is normal, since $\prod_{k \in \N(d)} (I- E^{\I_k})g'=g'.$ Let $g_1$ and $g_2$ be two normal solutions of equation
 \eqref{poisson_1}. Then $g_2 - g_1 \in \Ker\bigl(\prod_{k \in \N(d)} (I-U^*_k)\bigr),$ which, combined with
\eqref{relation3}, implies   $g_2 - g_1 \in \Ker (\prod_{k \in \N(d)} (I- E^{\I_k}) ).$ On the other hand, in view
of normality of the solutions  $g_1$ è $g_2$ we have $g_2 - g_1 \in \Ran \bigl(\prod_{k \in \N(d)} (I- E^{\I_k})\bigl).$ This implies $g_2-g_1 =0,$  which proves (2). \\
Let $f$ admit representation \eqref{poisson_1}. In view of just established item (2) of Proposition \ref{convergence1}, the function $g$ in this representation can be (and will be) chosen to be normal. Set $\mathbf M=(M_1,\dots,M_d)$ and $\mathbf N=(N_1,\dots,N_d).$  Then we have
$$ S^*_{\mathbf M} f=\sum_{0 \le \mathbf n \le \mathbf M- {\mathbf 1}_d} U^{*\mathbf n}f =
\bigl(\prod_{k=1}^d \bigl(\sum_{n=0}^{M_k-1}U_k^{*n}\bigr)\bigr) f =\bigl(\prod_{k=1}^d \bigl(I-U_k^{*M_k}\bigr)\bigr) g $$
and
\begin{equation} \label{calcul1}
\begin{split}
(N_1 \cdots N_d)^{-1}\!\!\!\sum_{0 \le \mathbf M \le \mathbf N- {\mathbf 1}_d} S^*_{\mathbf M}  \,  f \,\,& \\
= (N_1 \cdots N_d)^{-1}\sum_{0 \le \mathbf M \le \mathbf N- {\mathbf 1}_d}\!
 \!\! \bigl( \prod_{k=1}^d & \bigl( I-U_k^{*M_k}\bigr)\bigr)\, g \\
=\!(N_1 \cdots N_d)^{-1}\sum_{0 \le \mathbf M \le \mathbf N- {\mathbf 1}_d}  \sum_{r=0}^d & (-1)^r  \sum_{\mathit{S} \in  \mathcal S_{r,d}}
 \bigl( \prod_{l \in \mathit{S}} U_l^{*M_l}\bigr)\, g   \\
= \! \sum_{r=0}^d (-1)^r  \sum_{\mathit{S} \in  \mathcal S_{r,d}}(N_1 \cdots N_d )^{-1}\, \, & \sum_{0 \le \mathbf M \, \le \mathbf N- {\mathbf 1}_d}\! \bigl( \prod_{l \in \mathit{S}}  U_l^{*M_l}\bigr) \, g  \\
= \! \sum_{r=0}^d \, (-1)^r \,\, \sum_{\mathit{S} \in  \mathcal S_{r,d}}\, \Bigl(
\,\, &  \prod_{l \in \mathit{S}}\ \bigl( N_l^{-1} \sum_{M_l= 0}^{N_l - 1}\!  U_l^{*M_l} \bigr)\Bigr) \, g  \\
\underset{\mathbf N \to \infty}{\to} \sum_{r=0}^d (-1)^r \sum_{\mathit{S} \in  \mathcal S_{r,d}} E ^{\I_\mathit{S}}g,
\end{split}
\end{equation}
where on the last stage the multiparameter statistical ergodic theorem was applied.
Since $g$ is normal, $ E ^{\I_{\mathit{S}}}g=0$  for every non-empty $\mathit{S}$, while
$E ^{\I_{\emptyset}}g=g.$\\
Conversely, let for a normal function  $f\in L_p$ in the $L_p$-norm there exists  the limit
\begin{equation} \label{limit}
\underset{\mathbf N =(N_1, \dots,N_d)\to \infty}{\lim}(N_1 \cdots N_d)^{-1}\!\!\sum_{0 \le \mathbf M\le \mathbf N - {\mathbf 1}_d} S^*_{\mathbf M} \, f=g.
\end{equation}
 The operators $ U_1^*, \dots, U_d^*$ send normal functions to normal ones. Hence, $g$ is normal,
as a limit of normal functions. Further, acting as in \eqref{calcul1}, we obtain
\begin{equation} \label{calcul2}
\begin{split}
\bigl(\prod_{k \in \N(d)}\! (I-U^*_k)\bigr)g & \\
=  \underset{\mathbf N =(N_1, \dots,N_d)\to \infty}{\lim}(N_1 \cdots N_d)^{-1}\!\!\sum_{0 \le \mathbf M \le \mathbf N- {\mathbf 1}_d}
& S^*_{\mathbf M}  \bigl(\prod_{k \in \N(d)}\! (I-U^*_k)\bigr)  \, f \\
= \underset{\mathbf N =(N_1, \dots,N_d)\to \infty}{\lim}(N_1 \cdots N_d)^{-1}\!\! \sum_{0 \le \mathbf M \le \mathbf N- {\mathbf 1}_d}& \bigl(\prod_{k=1}^d \bigl(I-U_k^{*M_k}\bigr)\bigr)f \\
= \underset{\mathbf N =(N_1, \dots,N_d)\to \infty}{\lim} \!(N_1 \cdots N_d)^{-1}\sum_{0 \le \mathbf M \le \mathbf N- {\mathbf 1}_d} & \sum_{r=0}^d (-1)^r  \sum_{\mathit{S} \in  \mathcal S_{r,d}}\bigl( \prod_{l \in \mathit{S}} U_l^{*M_l}\bigr)\, f \\
= \! \sum_{r=0}^d (-1)^r \underset{\mathbf N =(N_1, \dots,N_d)\to \infty}{\lim}  \sum_{\mathit{S} \in  \mathcal S_{r,d}} &
\Bigl(\prod_{l \in \mathit{S}}\bigl( N_l^{-1} \sum_{M_l= 0}^{N_l - 1}\!  U_l^{*M_l} \bigr)\Bigr)\, \, f  \\
= \sum_{r=0}^d (-1)^r \sum_{\mathit{S} \in  \mathcal S_{r,d}}& E ^{\I_\mathit{S}}f = f. \\
\end{split}
\end{equation}
\end{proof}
\begin{proof} [Proof of Proposition \ref{convergence2}] The space of strictly normal $L_p-$function is invariant
with respect to the operators
 $U^*_k, k=1, \dots,d.$ This implies that $f$ is strictly normal. Further,
\begin{equation} \label{calcul3}
\begin{split}
&\!\sum_{0 \le \mathbf n \le \mathbf N- {\mathbf 1}_d} U^{*\mathbf n}f \!
=\sum_{0 \le \mathbf n \le \mathbf N- {\mathbf 1}_d} U^{*\mathbf n}\bigl(\prod_{k=1}^d (I-U^*_k)\bigr) g \\
&=\Bigl(\prod_{k=1}^d \bigl((I-U^{*}_k) \sum_{n_k=0}^{N_k-1} U_k^{*\,n_k} \bigr)\Bigr) g=\Bigl(\prod_{k=1}^d \bigl(I-U^{*N_k}_k \bigr)\Bigr)g\\
&=\sum_{r=0}^d (-1)^r \sum_{\mathit{S} \in  \mathcal S_{r,d}}\prod_{k \in \mathit{S}} U^{*N_k}_k g
\underset{\mathbf N \to \infty}{\to} \sum_{r=0}^d (-1)^r \sum_{\mathit{S} \in  \mathcal S_{r,d}}\prod_{k \in \mathit{S}} E_k^{\infty}g =g,
\end{split}
\end{equation}
where the last equation follows from the strict normality of $g$.
It follows from this representation that a normal solution is unique (this
follows also from Proposition \eqref{convergence2}), and (1) is proved. \\
 Starting to prove (2), let $g$ denote the sum of the series \eqref{series}. The strict normality of
 $g$ is a consequence of the strict normality of $f$ and the fact that the subspace of strictly normal $L_p-$functions is closed and invariant with respect to the operators  $U^*_k, k=1, \dots,d$. Further, analogously to \eqref{calcul3},
 \begin{equation} \label{calcul4}
\begin{split}
\Bigl(\prod_{k=1}^d (I-U^*_k) \Bigr) g& \\
= \Bigl(\prod_{k=1}^d (I-U^*_k)\Bigr)\, &\underset{\mathbf N =(N_1, \dots,N_d)\to \infty}{\lim}\sum_{0 \le \mathbf n \le \mathbf N- {\mathbf 1}_d}U^{*\mathbf n} f \\
=\underset{\mathbf N =(N_1, \dots,N_d)\to \infty}{\!\lim}&\Bigl(\prod_{k=1}^d \bigl(I-U^{*N_k}_k \bigr)\Bigr) f\\
= \sum_{r=0}^d (-1)^r \!\sum_{\mathit{S} \in  \mathcal S_{r,d}}& \underset{\mathbf N =(N_1, \dots,N_d)\to \infty}{\lim}\Bigl(\prod_{k \in \mathit{S}} U^{*N_k}_k \Bigr) f\\
&\,\,\,\,=\,\,\,\,f.
\end{split}
\end{equation}
Assertion of item (3) follows from items (1) and (2), if one first uses  (2), then notices that
 $f=\bigl(\prod_{k \in \N(d)}(I-U^{*k})\bigr)g$ implies  $f=\bigl(\prod_{k \in \mathit{S}}(I-U^{*k})\bigr)g'$ with
 $g'=\bigl(\prod_{k \notin \mathit{S}}(I-U^{*k})\bigr)g,$ and, finally, applies item (1) to the semigroup
 $\Z_+^{d,\mathit{S}}.$
\end{proof}
\section{Acknowledgement}
The author is grateful to the staff of the Erwin Schr\"odinger
Institute of Mathematical Physics (ESI) in Vienna  and to the
organizers of the workshops "Algebraic, geometric and
probabilistic aspects of amenability", "Amenability beyond groups"
(2007) and "Structural Probability" (2008) held at ESI, especially
to Klaus Schmidt, Vadim Kaimanovich and Anna Erschler.


\begin{thebibliography}{99}
\bibitem{BaDo1979}Basu A. K., Dorea C. C. Y.
{\em On functional central limit theorem for stationary martingale
   random fields}. Acta Math. Acad. Sci. Hungar., {\textbf 33} (1979): 3-4,
307--316.
\bibitem{CaWa1975} Cairoli R., Walsh J.B. {\em Stochastic integrals in the plane.}
Acta Math., {\bf 134} (1975): 111 - 183.


\bibitem{De1975} C. M. Deo. A functional central limit theorem for
stationary random fields. Ann. Probab.,
{\bf 3} (1975): 708--715.

\bibitem{Go1969}  Gordin M. I. \emph{On the central limit theorem for
stationary processes.}  (Russian)   Dokl. Akad. Nauk SSSR,  \textbf{188}  (1969):
739--741. Transl.: Soviet Math. Dokl. \textbf{10} (1969): 1174--1176.

\bibitem{Go1971}
 Gordin M. I. \emph{On behavior of variances of sums of random variables which form a stationary
 process}  (Russian. English summary)
Teor. Verojatnost. i Primenen., \textbf{16} (1971): 484 - 494. Transl.: Theor. Probability
Appl. \textbf{10} (1971): 474--484.

\bibitem{GoLi1978} Gordin M. I., Lifshits B. A.  \emph{Central limit theorem for stationary Markov processes.} (Russian)  Dokl. Akad. Nauk SSSR  \textbf{239} (1978): 766--767. Transl.: Soviet Math. Dokl. \textbf{19} (1978): 2,
392 - 394.

\bibitem{Leone1977} Leonenko N. N.
\emph{A central limit theorem for a class of random fields.} (Russian. English summary. Uzbek
summary) Teor. Verojatnost. i Mat. Statist. (Tashkent), \textbf{17} (1977): 87--93, 165.

\bibitem{Le1964}Leonov V. P. \emph{On the dispersion of time means of a stationary
stochastic process.} (Russian. English
summary) Teor. Verojatnost. i Primenen. \textbf{6} (1961): 93 -- 101.

\bibitem{DeDeGo2008}
Dehling H., Denker M., Gordin M.
$U$- and $V$-statistics
of a measure preserving  transformation:
central limit theorems. In preparation.

\bibitem{FuPe2001}
Fukuyama K., Petit B.
  Le th\'eor\`eme limite central pour les suites de R. C. Baker,
  \emph{Ergodic Theory Dynam. Systems}. {\bf 21} (2001): 479--492.

\bibitem{GoWe2008} Gordin M., Weber M..
Degeneration in the central limit theorem for a class of multivariate  actions.
In preparation.

\bibitem{Ma1978} Maigret N. {\em Th\'eor\`eme de limite centrale fonctionnel pour une cha\^\i ne
de Markov r\'ecurrente au sens de Harris et positive.} Ann. Inst.
H. Poincar\'e Sect. B (N.S.), \textbf{14} (1978): 4, 425--440.


\bibitem{MaWo2000} Maxwell  M., Woodroofe M.
{\em Central limit theorems for additive functionals of Markov
chains.} Ann. Probab., \textbf{28} (2000): 2 , 713--724.

\bibitem{PeUt2005}
 Peligrad M., Utev S. \emph{A new maximal inequality and invariance principle for stationary
   sequences.} Ann. Probab., \textbf{33} (2005): 2 , 798--815.




\end{thebibliography}
\end{document}